\theoremstyle{plain} \newtheorem{theorem}{Theorem} [section]
\newtheorem{definition}[theorem]{Definition}
\newtheorem{lemma}[theorem]{Lemma}
 \theoremstyle{remark}
\newtheorem{remark}[theorem]{Remark}
\newtheorem*{remark*}{Remark}
\def\C{{\mathbb C}}% complex numbers 
\def\R{{\mathbb R}}% real numbers
\def\Z{{\mathbb Z}}% integers 
\def\T{{\mathbb T}}% torus 
\def\({\left(} \def\){\right)} \def\<{\left\langle}
\def\>{\right\rangle}  \def\ge{\geqslant}
\def\d{{\partial}} \def\eps{\varepsilon}
\DeclareMathOperator{\RE}{Re} \DeclareMathOperator{\IM}{Im}
\numberwithin{equation}{section}
\renewcommand{\r}{\right}
\newcommand{\pt}{\partial}
\newcommand{\cleq}{\lesssim}
\renewcommand{\Im}{\operatorname{Im}}
\def\tbra[#1,#2]{\left\langle #1 , #2\right\rangle} 
\def\rbra[#1,#2]{\left( #1 , #2 \right)} 
\newcommand{\ce}{\mathrel{\mathop:}=}
\def\norm[#1]{\left\Vert #1 \right\Vert}
\def\abs[#1]{\left\vert #1 \right\vert}
\begin{document}

\title[]{Low regularity solutions to the logarithmic Schr\"odinger equation}
\author[R. Carles]{R\'emi Carles} \address{Univ Rennes, CNRS\\ IRMAR - UMR
  6625\\ F-35000 Rennes, France}
\email{Remi.Carles@math.cnrs.fr}

\author[M. Hayashi]{Masayuki Hayashi}
\address{Dipartimento di Matematica, Universit\`a di Pisa, Largo Bruno Pontecorvo, 5 56127 Pisa, Italy
\newline\indent
Waseda Research Institute for Science and Engineering, Waseda University, Tokyo 169-8555, Japan
}
 \email{masayuki.hayashi@dm.unipi.it}

\author[T. Ozawa]{Tohru Ozawa}
\address{Department of Applied Physics\\ Waseda University\\ Tokyo
  169-8555\\  Japan}
\email{txozawa@waseda.jp}

\begin{abstract}
 We consider the logarithmic Schr\"odinger equation, in various
 geometric settings. We show that the flow map can be uniquely
 extended from $H^1$ to $L^2$, and that this extension is Lipschitz
 continuous. Moreover, we prove the regularity of the flow map in intermediate Sobolev spaces.
% In addition, intermediary Sobolev regularity is propagated.
\end{abstract}

%\thanks{The first author is partially supported by Centre Henri
%  Lebesgue, program 
%ANR-11-LABX-0020-0. A CC-BY public copyright license has
%  been applied by the author to the present document and will be
%  applied to all subsequent versions up to the Author Accepted
%  Manuscript arising from this submission. } 
\maketitle

%\setcounter{tocdepth}{1}%{1}sectionまで表示
%\tableofcontents

%%%%%%%%%%%%
%%%%%%%%%%%%

\section{Introduction}
\label{sec:1}

We consider the Cauchy problem associated to the logarithmic
Schr\"odinger equation
\begin{equation}
\label{eq:1.1}
 i\d_t u +\Delta u +\lambda u\ln\(|u|^2\)=0,
\quad u_{\mid t=0} =\varphi \, , 
\end{equation}
with $x\in \Omega$, $\lambda\in \R$.  The standard occurrence for
$\Omega$ is $\R^d$. In view of the framework considered in numerical
simulations (see e.g. \cite{BCST}), the domain $\Omega$ that we consider
may be:
\begin{itemize}
\item The whole space $\Omega=\R^d$,
\item A half space $\Omega=\R^d_+$, with  zero Dirichlet boundary
  condition, $u_{\mid \d \Omega}=0$, 
\item A general domain $\Omega\subset\R^d$;
    \eqref{eq:1.1} is then considered with  
  zero Dirichlet boundary condition, and the
  Laplacian $\Delta$ is understood to be the self-adjoint realization
  in $H^{-1}(\Omega)$ with the domain $D(\Delta)=H^1_0(\Omega)$ (see
  e.g. \cite[Chapter~2]{CazHar}), 
 % A smooth  domain $\Omega\subset\R^d$ with bounded boundary;
\item The torus $\Omega=\T^d=\R^d/\Z^d$.
\end{itemize}
In all cases, there is no restriction on the space dimension $d\ge
1$.  Formally,
the mass and energy are independent of time:
%the mass, momentum
\begin{align*} 
& M(u(t))=\|u(t)\|_{L^2(\Omega)}^2,\\ 
%& J(u(t))=
%\IM\int_{\Omega}\bar u(t,x)\nabla u(t,x)dx,\\ 
&
E(u(t))=\|\nabla u(t)\|_{L^2(\Omega)}^2-\lambda\int_{\Omega}
|u(t,x)|^2\(\ln(|u(t,x)|^2)-1\)dx.
\end{align*}
The equation \eqref{eq:1.1} was introduced in \cite{BiMy76} for
quantum mechanics, and
has attracted the interest of physicists from various fields ever
since (see
e.g. \cite{buljan,DFGL03,hansson,HeRe80,Hef85,KEB00,yasue,Zlo10}). From
the mathematical point of view, the 
first study goes back to \cite{CaHa80}, where the Cauchy problem was
investigated in the case $\Omega=\R^d$. The main results in \cite{CaHa80} we mention here are:
\begin{itemize}
\item Theorem~1.2 b): if $\varphi\in L^2(\R^d)$, then \eqref{eq:1.1}
  has a unique weak solution $u\in C(\R,L^2(\R^d))$ in the sense of Brezis \cite{Brezis}. 
%    The notion of weak solution, borrowed from \cite{Brezis}, is the
%  following: there
%  exists a sequence of strong solutions $u_n$ to
%  \begin{equation*}
%     i\d_t u_n +\frac{1}{2} \Delta u_n =\lambda \ln\(|u_n|^2\)u_n
%     +f_n,
%   \end{equation*}
%   with $f_n\to 0$ in $L^1_{\rm loc}(\R,L^2(\R^d))$ and $u_n\to u$
%   uniformly in $C([0,T],L^2(\R^d))$ for any $T>0$.
 \item Theorem~2.1: if $\lambda>0$, and $\varphi\in H^1(\R^d)$ is such
   that $|\varphi|^2\ln(|\varphi|^2) \in L^1(\R^d)$, then \eqref{eq:1.1} has a
   unique solution $u\in C(\R,H^1(\R^d))$. In addition,
   $|u(t)|^2\ln(|u(t)|^2)\in L^1(\R^d)$ for all $t\in \R$, and the mass
   and energy of $u$ are independent of time. 
 \end{itemize}
 The goal of this paper is mostly to revisit the first statement
 above. The weak solution in Theorem~1.2 b) is obtained as a limit of the sequence of strong solutions, and the existence of this limit is guaranteed through the maximal monotone theory. For the convenience of the reader, more details are provided in Appendix \ref{sec:A}.

The mathematical study of \eqref{eq:1.1} has been considered since \cite{CaHa80},
regarding both the Cauchy problem 
(\cite{AMS14,CaGa18,HayashiM2018}) and the 
dynamical properties of the solutions
(e.g. \cite{Caz83,Ar16,CaGa18,FEDCDS,FeAIHP}).  More recently, the
Cauchy problem was revisited by the second and third authors in
\cite{HO}, where strong global solutions are
provided in a constructive way, in each of the following functional settings:
%regardless of the sign of $\lambda$
\begin{itemize}
\setlength{\itemsep}{3pt}
\item $\varphi\in H^1(\R^d)$ for $\lambda\neq 0$,
\item Energy space: $\varphi \in W_1:=\{f\in H^1(\R^d),\
  |f|^2\ln(|f|^2)\in L^1(\R^d)\}$ for $\lambda\neq 0$, 
\item $H^2$ energy space: $\varphi \in W_2:=\{f\in H^2(\R^d),\
  f\ln(|f|^2)\in L^2(\R^d)\}$ for $\lambda>0$.
\end{itemize}
We emphasize that unlike in the case of more standard power-like
nonlinearities for Schr\"odinger equations, due to the singularity of
the logarithm at the origin, several questions
regarding the Cauchy problem \eqref{eq:1.1} are unclear, such as:
\begin{itemize}
\setlength{\itemsep}{3pt}
\item The propagation of higher regularity: if $\varphi\in H^3$, can we
  guarantee that the solution $u$ remains in $H^3$, even locally in
  time?
 \item Is there a minimal regularity for a ``reasonable''
   notion of solution?
\end{itemize}

In this paper, we focus on the second question. 
%Note that since, 
When $\Omega=\R^d$, \eqref{eq:1.1} is invariant under Galilean transformations  
\[
u(t,x) \mapsto e^{iv\cdot x -i|v|^2 t}u(t,x-2vt) \quad\text{for}~v \in \R^d,
\]
which leave the $L^2(\R^d)$ norm invariant, so it may be expected that like
for nonlinear Schr\"odinger equations with pure power nonlinearities, the flow map fails to
be uniformly continuous on $H^s(\R^d)$ when $s<0$ (\cite{KPV01,CCT, CCTpre}). See also \cite{CCTpre, CDS12, Ki18} for stronger ill-posedness results. We note, however, that in the case
of the KdV equation the flow map was extended continuously to the level of $H^{-1}(\R)$ (\cite{KV19}), even though it has been known that the flow map cannot be uniformly continuous on $H^s(\R)$ when $s<-3/4$. Similar well-posedness pictures can also be seen in the cubic NLS equation and the modified KdV equation (\cite{HKVpre}). 
%with respect to the $H^s(\R)$ norm as soon as $s<-3/4$. 

Note that there is no
natural scaling associated to \eqref{eq:1.1}, when $\Omega=\R^d$,
of the form $u_\kappa(t,x) = \kappa^\alpha
u(\kappa^\beta t,\kappa^\gamma x)$, which leaves the equation
invariant. Yet, another invariance, rather unique for nonlinear
Schr\"odinger equations, shows that the size of the initial data does
not affect the Cauchy problem, nor  the dynamical properties of the
solution: if $u$ solves \eqref{eq:1.1}, then for any $z\in \C$,
\begin{equation*}
  u_z(t,x) = z u(t,x) e^{i\lambda t\ln|z|^2}
\end{equation*}
solves the same equation, with initial datum $z\varphi$. This unusual
invariance also shows that regardless of the function spaces
considered, the flow map cannot be $C^1$ at the origin, due to the
above oscillating factor: it is at most Lipschitz continuous. 

The main result of this paper is that the flow map defined on $H^1(\Omega)$ can be uniquely extended on $L^2(\Omega)$, this extension is Lipschitz continuous, and preserves possible intermediate Sobolev regularity:
  \begin{theorem}
\label{thm:1.1}
The equation \eqref{eq:1.1} is globally well-posed in $L^2(\Omega)$ in the following sense: The $H^1$ solution map $\Phi$  is uniquely extended to $L^2(\Omega)$, and for $\varphi\in L^2(\Omega)$, $u=\Phi(\varphi)\in C(\R,L^2(\Omega))$ satisfies 
\begin{align*}
i\pt_t u+\Delta
  u+\lambda u\ln(|u|^2)=0\quad\text{in}~H^{-2}(\omega)
\end{align*}
for any open sets $\omega\Subset\Omega$ and all $t\in\R$, with $u_{\mid t=0}=\varphi$.
Moreover, $\Phi$ is Lipschitz continuous: 
\begin{align*}
%%label{L2-lip-2}%
\norm[ \Phi(\varphi)(t)-\Phi(\psi)(t) ]_{L^2}\leq e^{2|\lambda t|}\norm[\varphi-\psi]_{L^2}
%,\quad t\in\R
%\quad\text{for}~t\in(-T,T).
\end{align*}
for any $\varphi,\psi\in L^2(\Omega)$ and all
$t\in\R$. In the case where $\Omega\in \{\R^d,\R^d_+,\T^d\}$, if in addition
$\varphi\in H^s(\Omega)$ for some $s\in (0,1)$, then $\Phi(\varphi)\in
C(\R,H^s(\Omega))$. 
\end{theorem}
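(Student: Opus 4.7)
The plan rests on a classical pointwise identity for the logarithmic nonlinearity. The starting estimate is
\[
\left|\Im\left((z_1 \ln|z_1|^2 - z_2 \ln|z_2|^2)\overline{(z_1 - z_2)}\right)\right| \leq 2\,|z_1 - z_2|^2 \quad\text{for all } z_1, z_2 \in \C,
\]
going back to \cite{CaHa80}. Applied to two $H^1$ solutions $u, v$ of \eqref{eq:1.1} with data $\varphi, \psi$, I pair the equation satisfied by $u-v$ with $u-v$ in $L^2$ and take imaginary parts: the Laplacian term vanishes after integration by parts (zero Dirichlet condition when $\partial\Omega \neq \emptyset$, or standard integration on $\R^d$ and $\T^d$), so the pointwise bound yields $\frac{d}{dt}\|u(t)-v(t)\|_{L^2}^2 \leq 4|\lambda|\,\|u(t)-v(t)\|_{L^2}^2$, and Gr\"onwall's lemma produces the announced Lipschitz bound $\|u(t)-v(t)\|_{L^2} \leq e^{2|\lambda t|}\|\varphi-\psi\|_{L^2}$ for $H^1$ data.

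To extend $\Phi$ to $L^2(\Omega)$, I approximate any $\varphi \in L^2$ by $\varphi_n \in H^1$ with $\varphi_n \to \varphi$ in $L^2$; by the previous step $\{\Phi(\varphi_n)\}$ is Cauchy in $C([-T,T], L^2)$ for every $T > 0$, and I define $\Phi(\varphi)$ to be its limit. The Lipschitz bound passes to the limit and forces uniqueness of the extension. To interpret \eqref{eq:1.1} in $H^{-2}(\omega)$ for $\omega \Subset \Omega$, I use that $|z \ln|z|^2| \lesssim |z|^{1-\delta} + |z|^{1+\delta}$, so any $u(t) \in L^2(\Omega)$ gives $u(t)\ln|u(t)|^2 \in L^p(\omega)$ for every $p < 2$. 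The Sobolev embedding $L^p(\omega) \hookrightarrow H^{-2}(\omega)$ on the bounded set $\omega$ (valid for $p$ sufficiently close to $2$ in any dimension) then makes the nonlinear term a legitimate element of $H^{-2}(\omega)$, and the equation follows by passing to the limit in the $H^1$-level formulation, using dominated convergence on an a.e.\ subsequence to deal with the nonlinear term.

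For the $H^s$ persistence with $s \in (0,1)$ and $\Omega \in \{\R^d, \R^d_+, \T^d\}$, the geometric restriction enters because a Fourier description of $H^s$ is needed (directly on $\R^d, \T^d$, after reflection on the half-space). My strategy is nonlinear interpolation between $L^2$ and $H^1$: combine the $L^2$-Lipschitz estimate above with the $H^1$-boundedness of $\Phi$ inherited from \cite{HO}, through the Gagliardo--Nirenberg-type inequality $\|w\|_{H^s} \lesssim \|w\|_{L^2}^{1-s}\|w\|_{H^1}^s$. Concretely, given $\varphi \in H^s$ I approximate it by a Fourier cutoff $\varphi_n = P_{\leq n}\varphi \in H^1$, which satisfies $\|\varphi-\varphi_n\|_{L^2} \lesssim n^{-s}\|\varphi\|_{H^s}$ and $\|\varphi_n\|_{H^1} \lesssim n^{1-s}\|\varphi\|_{H^s}$; applying the interpolation inequality to $u_n(t)-u_m(t)$ and balancing the two factors should yield the Cauchy property in $H^s$, provided the $H^1$ norm of $\Phi(\varphi_n)(t)$ grows at most polynomially in $\|\varphi_n\|_{H^1}$. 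I expect this polynomial control on the $H^1$ norm to be the main obstacle, since the known $H^1$-estimate for \eqref{eq:1.1} leans on the logarithmic energy, which is not majorised by $\|\varphi\|_{H^1}$ alone. Once it is secured, the time continuity of $u(t)$ in $H^s$ follows from $L^2$ continuity combined with the uniform $H^s$ bound obtained by weak limits.
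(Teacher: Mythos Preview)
Your treatment of the $L^2$ theory is essentially the paper's: the Lipschitz bound from the pointwise inequality~\eqref{eq:1.2}, extension by density, and interpreting the nonlinearity in negative Sobolev spaces on $\omega\Subset\Omega$ all match Sections~\ref{sec:2}--\ref{sec:3}. The paper is slightly more careful (a spatial cutoff $\zeta_R$ in Lemma~\ref{lem:3.3} because the $H^1$ equation only holds on $\omega\Subset\Omega$; a splitting $g=g_1+g_2$ in Lemma~\ref{lem:3.5} rather than dominated convergence), but your outline is sound there.

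The $H^s$ part, however, has a real gap, and it is \emph{not} the one you flag. The $H^1$ bound you worry about is in fact linear: from Section~\ref{sec:2} one has $\|\nabla \Phi(\varphi_n)(t)\|_{L^2}\le e^{2|\lambda t|}\|\nabla\varphi_n\|_{L^2}$, so $\|\Phi(\varphi_n)(t)\|_{H^1}\lesssim_t n^{1-s}\|\varphi\|_{H^s}$ for the Fourier cutoff $\varphi_n=P_{\le n}\varphi$. The actual problem is that interpolating the $L^2$-Lipschitz estimate against this $H^1$ bound does \emph{not} yield a Cauchy sequence in $H^s$. For $m<n$ you get $\|u_n-u_m\|_{L^2}\lesssim m^{-s}$ and $\|u_n-u_m\|_{H^1}\lesssim n^{1-s}$, hence
\[
\|u_n-u_m\|_{H^s}\lesssim \|u_n-u_m\|_{L^2}^{1-s}\|u_n-u_m\|_{H^1}^{s}\lesssim (n/m)^{s(1-s)},
\]
which does not vanish along dyadic blocks. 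A Littlewood--Paley version of the same computation shows that this route gives at best $u\in H^{s-\varepsilon}$ for every $\varepsilon>0$, with a logarithmic divergence at $s$. The missing ingredient is an $H^1$-Lipschitz (or difference) estimate for $\Phi$, which is not available for logNLS.

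The paper avoids interpolation entirely. The key idea (Section~\ref{sec:4}) is to use the Gagliardo difference-quotient form of the $H^s$ norm,
\[
\|f\|_{H^s}^2\sim\|f\|_{L^2}^2+\iint \frac{|f(x+y)-f(x)|^2}{|y|^{d+2s}}\,dx\,dy,
\]
and apply the pointwise inequality~\eqref{eq:1.2} (via Lemma~\ref{lem:2.1}) with $z_1=u^\eps(t,x+y)$, $z_2=u^\eps(t,x)$ \emph{inside the integrand}. Since the Laplacian commutes with translations, differentiating the $H^s$ norm of the regularized solution $u^\eps$ in time kills the linear part and leaves exactly a term bounded by $4|\lambda|\|u^\eps(t)\|_{H^s}^2$; Gronwall then gives a uniform-in-$\eps$ $H^s$ bound with no loss. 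This direct use of~\eqref{eq:1.2} on spatial differences is the point you are missing, and it is also why the result is restricted to translation-invariant geometries $\{\R^d,\R^d_+,\T^d\}$.
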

%%%%%%%
Our contributions in this paper can be summarized as follows.
%\begin{enumerate}
%\item (Characterization of L2 solutions (revisited)
%\item GWP in $L^2$ (independent of maximal monotone theory)
%\item Regularity of $H^s~(0<s<1)$
%\end{enumerate}
%\subsubsection*{1.\,Meaning of $L^2$ solutions revisited.}
\\[2pt]
\noindent\emph{1.\,Meaning of $L^2$ solutions revisited.}
%%%
We show that logarithmic nonlinearities make sense for general functions belonging to $L^\infty((-T,T), L^2(\Omega))$ for $T>0$, which in particular gives the meaning of solutions to \eqref{eq:1.1} in the distribution sense (see Lemma \ref{lem:3.1} and Remark \ref{rem:3.2}). 
This differs from \cite{CaHa80} in that it gives the meaning to $L^2$ solutions regardless of how the solution is constructed or independently of limiting procedures.
%(independently of limiting procedures).
%%
\\[2pt]
\noindent\emph{2.\,GWP in $L^2$ independently of the maximal monotone theory.} 
We construct $L^2$ solutions as an extension of the solution map on $H^1$ while preserving $L^2$ Lipschitz flow in \cite{CaHa80} (see Lemma \ref{lem:3.3} below). The formulation of the global well-posedness is inspired from recent results on completely integrable systems (\cite{KV19, HKVpre, HKNVpre}). Note that the $L^2$ Lipschitz flow is a natural consequence of the remarkable inequality
\begin{align}
\label{eq:1.2}
\bigl| \Im\bigl[ (\overline{z_1-z_2})( z_1\log (|z_1|)-z_2\log(|z_2|) ) \bigr]\bigr|
\leq |z_1-z_2|^2\quad\text{for all}~z_1,z_2\in\C,
\end{align}
which was discovered in \cite[Lemme 1.1.1]{CaHa80}.
%The $L^2$ weak solution of \cite{CaHa80} is also treated as a limit of the strong solution, 
It may be common to \cite[Theorem~1.2~b)]{CaHa80} that $L^2$ solutions are constructed as a limit of sequences of strong solutions, but
our construction would be regarded to be a more direct consequence of \eqref{eq:1.2} and it is independent of the maximal monotone theory.
\\[2pt]
\noindent\emph{3.\,Intermediate Sobolev regularity.} 
%A new fact discovered as an interesting application of the CH-inequality.
This regularity result is obtained as a new application of the inequality \eqref{eq:1.2}. By using the Sobolev norm based on the difference quotient, we can effectively utilize \eqref{eq:1.2} to obtain an a priori estimates on $H^s$ for all $s\in(0,1)$. The domain restriction here comes from the use of space translation invariance in our proof.
\\[2pt]
\indent
The rest of the paper is organized as follows. In
Section~\ref{sec:2}, we recall the construction of the
$H^1$ solution map, from \cite{HO}. In Section~\ref{sec:3}, we show
that this map can be uniquely extended to $L^2$, as a Lipschitz
continuous map. In Section~\ref{sec:4}, we show that the intermediate
Sobolev regularity is propagated by this map.

\subsection*{Notation}
%%%%
We sometimes use the abbreviated notation such as
\begin{align*}
C_T(X)=C([-T,T] ,X),\quad L^\infty_T(X)=L^\infty((-T,T) ,X)
%\quad\text{for}~T>0.
\end{align*}
for $T>0$ and a Banach/Fr\'echet space $X$. For open sets $\omega, \Omega\subset\R^d$, we write $\omega\Subset\Omega$ if $\bar{\omega}\subset\Omega$ and $\bar{\omega}$ is compact, where $\bar{\omega}$ is the closure of $\omega$ in $\R^d$.

%Definition of Sobolev space
According to  \cite{DPV}, we define the fractional Sobolev spaces $H^s$. 
%Let $\Omega$ be a general open set in $\R^d$. 
For a general open set $\Omega\subset\R^d$, the fractional Sobolev spaces $H^s(\Omega)$ for $s\in(0,1)$ are defined via the norm
\begin{align}
\label{eq:1.3}
\norm[f]_{H^s(\Omega)}^2=\norm[f]_{L^2(\Omega)}^2+ \iint_{\Omega\times\Omega}
\frac{|f(x)-f(y)|^2}{|x-y|^{d+2s}}dxdy.  
\end{align}
We define $H^s_0(\Omega)$ by the closure of $C^\infty_c(\Omega)$ in the norm $\norm[\cdot]_{H^s(\Omega)}$ and $H^{-s}(\Omega)$ by the dual space of $H^s_0(\Omega)$ for $s\in(0,1)$. When $\Omega=\R^d$, the Sobolev space via the norm \eqref{eq:1.3} coincides with Bessel potential spaces endowed with the norm
\begin{align}
\label{eq:1.4}
\norm[f]_{H^s(\R^d)}^2=\norm[ (1+4\pi^2|\xi|^2)^s \hat{f}(\xi) ]_{L^2(\R^d)}^2
\quad\text{for}~s\in\R,
\end{align}
where $\hat{f}(\xi)$ is the Fourier transform defined by
\begin{align*}
\hat{f}(\xi) =\int_{\R^d} f(x)e^{-2\pi ix\cdot\xi}dx\quad\text{for}~\xi\in\R^d.
\end{align*}
% \cite[Proposition 3.4]{DPV}
%%
Similarly to Bessel potential spaces on $\R^d$, the Sobolev spaces $H^s(\T^d)$ on the torus are defined via the norm
\begin{align}
\label{eq:1.5}
\|f\|_{H^s(\T^d)}^2 = \sum_{n\in \Z^d}\(1+4\pi^2|n|^2\)^s |\hat f(n)|^2\quad\text{for}~s\in\R,
\end{align}
where $\hat{f}(n)$ is the discrete Fourier transform defined by
\begin{align*}
\hat{f}(n) =\int_{\T^d} f(x)e^{-2\pi ix\cdot n}dx \quad\text{for}~n\in\Z^d.
\end{align*}
%%
%Regarding the domain $\Omega$, unless specifically mentioned, we consider those listed on the introduction.

We use $A\cleq B$ to denote the inequality $A\leq CB$ for some constant $C>0$. The dependence of $C$ 
%on the parameters 
is usually clear from the context and we often omit this
dependence. We may sometimes write $A\cleq_*B$ to clarify the dependence of the implicit constant.

%denote by $C=C(*)$ a constant depending on the quantities appearing in parentheses to clarify the dependence.

%

%%%%%%%%%%%%
%%%%%%%%%%%%

\section{Global $H^1$ solutions}
\label{sec:2}

In this section we review global $H^1$ solutions to \eqref{eq:1.1}. 
Here let $\Omega\subset\R^d$ be a general domain and let $\varphi\in H^1_0(\Omega)$. 
Following the strategy of \cite{HO}, we
regularize \eqref{eq:1.1} like in \cite{BCST}: for $\eps >0$, we consider
approximate solutions $u^\eps$ to  
\begin{equation}
  \label{eq:2.1}
  i\d_t u^\eps+\Delta u^\eps+2\lambda u^\eps \ln\(
  |u^\eps|+\eps\)= 0,\quad u^\eps_{\mid t=0}=\varphi.
\end{equation}
As the nonlinearity is now smooth, with moderate growth at infinity,
the equation \eqref{eq:2.1} has a unique, global solution (see
e.g. \cite[Chapter~3]{CazCourant}, \cite[Section 2.1]{HO}),
\begin{equation*}
  u^\eps\in C(\R,H_0^1(\Omega))\cap C^1(\R,H^{-1}(\Omega)).
\end{equation*}
Uniqueness and regularity in time in the case of
\eqref{eq:2.1} follow from the generalization of the inequality \eqref{eq:1.2}, generalized successively in
\cite{BCST} and \cite{HO}. We state the most general version: 
\begin{lemma}[{\cite[Lemma~A.1]{HO}}]\label{lem:2.1}
  For all $z_1,z_2\in \C$,
  $\eps_1,\eps_2\ge 0$, 
\begin{align*} 
\left| \IM\bigl[ \(\overline{z_1-z_2}\)\(z_1\ln\(|z_1|+\eps_1\) -
z_2\ln\(|z_2|+\eps_2\)\) \bigr] \r|&\leq |z_1-z_2|^2\\
&\quad+|\eps_1-\eps_2|\times |z_1-z_2|.  
\end{align*}
\end{lemma}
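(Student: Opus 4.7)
The plan is to decouple the two roles played by the shifts $\varepsilon_1,\varepsilon_2$ by splitting
\[
z_1\ln(|z_1|+\varepsilon_1) - z_2\ln(|z_2|+\varepsilon_2) = A + B,
\]
where $A := z_1\ln(|z_1|+\varepsilon_1) - z_2\ln(|z_2|+\varepsilon_1)$ keeps the common shift $\varepsilon_1$ and $B := z_2\bigl[\ln(|z_2|+\varepsilon_1) - \ln(|z_2|+\varepsilon_2)\bigr]$ involves only $z_2$. The triangle inequality reduces matters to bounding $|\Im[\overline{(z_1-z_2)} A]|$ and $|\Im[\overline{(z_1-z_2)} B]|$ separately, with the expectation that $A$ contributes the $|z_1-z_2|^2$ term and $B$ produces the mixed term $|\varepsilon_1-\varepsilon_2|\cdot|z_1-z_2|$.

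The $B$-piece is straightforward: $|\Im[\overline{(z_1-z_2)} B]| \leq |z_1-z_2|\cdot|z_2|\cdot |\ln((|z_2|+\varepsilon_1)/(|z_2|+\varepsilon_2))|$, and assuming $\varepsilon_1\geq\varepsilon_2$ without loss, the inequality $\ln(1+s)\leq s$ with $s = (\varepsilon_1-\varepsilon_2)/(|z_2|+\varepsilon_2)$ gives $|z_2|\cdot\ln(\cdot) \leq |\varepsilon_1-\varepsilon_2|$. For the $A$-piece, writing $\varepsilon := \varepsilon_1$ and expanding
\[
A = (z_1-z_2)\ln(|z_1|+\varepsilon) + z_2\bigl[\ln(|z_1|+\varepsilon) - \ln(|z_2|+\varepsilon)\bigr],
\]
multiplication by $\overline{z_1-z_2}$ and passage to the imaginary part kills the first term (it is $|z_1-z_2|^2$ times a real logarithm) and yields the clean identity
\[
\Im[\overline{(z_1-z_2)} A] = \Im[\overline{z_1}z_2]\cdot\ln\frac{|z_1|+\varepsilon}{|z_2|+\varepsilon}.
\]
Adding the same non-negative constant to both numerator and denominator moves a ratio toward $1$, so $|\ln((a+\varepsilon)/(b+\varepsilon))|\leq|\ln(a/b)|$ for all $a,b>0$ and $\varepsilon\geq 0$. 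This reduces the problem to the classical Cazenave--Haraux pointwise bound $|\Im[\overline{z_1}z_2]|\cdot|\ln(|z_1|/|z_2|)|\leq|z_1-z_2|^2$; the degenerate cases where $z_1z_2=0$ are immediate, since then $\Im[\overline{z_1}z_2]=0$.

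The only genuinely nontrivial step is this classical inequality, which I expect to be the main obstacle. In polar form $z_j = r_j e^{i\theta_j}$, with $r_1\geq r_2>0$ and $\alpha := \theta_2-\theta_1$, the inequality reads
\[
r_1 r_2 |\sin\alpha| \ln(r_1/r_2) \leq (r_1-r_2)^2 + 2 r_1 r_2(1-\cos\alpha).
\]
Using $|\sin\alpha|\leq 2|\sin(\alpha/2)|$ and $1-\cos\alpha = 2\sin^2(\alpha/2)$, the AM--GM lower bound
\[
(r_1-r_2)^2 + 4 r_1 r_2 \sin^2(\alpha/2) \geq 4\sqrt{r_1 r_2}\,|r_1-r_2|\,|\sin(\alpha/2)|
\]
reduces the claim to the one-variable inequality $\sqrt{r_1 r_2}\,\ln(r_1/r_2) \leq r_1-r_2$. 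Setting $t := \sqrt{r_1/r_2}\geq 1$, this becomes $2t\ln t\leq t^2-1$, which vanishes at $t=1$ and has non-negative derivative $2(t-1-\ln t)\geq 0$ for $t\geq 1$ (by the elementary $\ln t\leq t-1$), closing the proof.
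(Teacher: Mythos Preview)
Your proof is correct. Note, however, that the paper itself does not supply a proof of this lemma: it is quoted from \cite[Lemma~A.1]{HO} (and generalizes the original Cazenave--Haraux inequality \cite[Lemme~1.1.1]{CaHa80}), so there is no in-paper argument to compare against. Your decomposition $A+B$ isolating the $\varepsilon$-dependence is natural and matches the spirit of those references; the identity $\Im[\overline{(z_1-z_2)}A]=\Im[\overline{z_1}z_2]\ln\frac{|z_1|+\varepsilon}{|z_2|+\varepsilon}$ is exactly the right observation, and the monotonicity $\bigl|\ln\frac{a+\varepsilon}{b+\varepsilon}\bigr|\le\bigl|\ln\frac{a}{b}\bigr|$ cleanly reduces to the unregularized case.

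One minor remark on bookkeeping: after the AM--GM step, what you actually need is $\sqrt{r_1r_2}\,\ln(r_1/r_2)\le 2(r_1-r_2)$, whereas you state and prove the sharper bound $\sqrt{r_1r_2}\,\ln(r_1/r_2)\le r_1-r_2$. This is harmless (the stronger inequality certainly suffices), but you may want to phrase the reduction accordingly. The degenerate cases $z_1=0$ or $z_2=0$ are handled correctly via $\Im[\overline{z_1}z_2]=0$, with the standard convention $0\ln 0=0$ implicit in the original expression.
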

%This lemma, together with 
The uniform energy estimate
\begin{equation*}
  \|\nabla u^\eps(t)\|^2_{L^2}\leq e^{4\abs[\lambda t] }\|\nabla \varphi\|^2_{L^2},
\end{equation*}
is easily obtained by differentiating the left hand side in time and invoking Gronwall's lemma. Combined this with Lemma \ref{lem:2.1}, one can prove that $\{u^\eps\}$ forms a Cauchy sequence in $C_T(L^2_{\rm loc}(\Omega))$ as $\eps\downarrow0$ for any $T>0$. Thus, we obtain the following result.
\begin{theorem}[From Theorem~4.1 in \cite{HO}]\label{thm:2.2}
  For any $\varphi\in H^1_0(\Omega)$, there exists a unique solution $u\in
  C(\R,H^1_0(\Omega))$ to \eqref{eq:1.1}, in the sense of
  \begin{equation*}
    i\d_t u +\Delta u+\lambda u\ln\(|u|^2\)=0\quad\text{in
    }H^{-1}(\omega) ,
  \end{equation*}
  for all $\omega\Subset \Omega$ and all $t\in \R$, and with
  $u_{\mid t=0}=\varphi$. 
\end{theorem}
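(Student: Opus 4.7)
The plan is to pass to the limit $\eps\downarrow 0$ in the regularized problem \eqref{eq:2.1}, using the uniform $H^1$ bound and the Cauchy property in $C_T(L^2_{\mathrm{loc}}(\Omega))$ already recorded, and then to verify uniqueness by a global energy identity based on \eqref{eq:1.2}. The Cauchy property delivers a limit $u\in C(\R, L^2_{\mathrm{loc}}(\Omega))$, and Banach--Alaoglu combined with the uniform $H^1$ bound gives, after a further extraction, weak-$\ast$ convergence of $u^\eps$ to the same $u$ in $L^\infty_{\mathrm{loc}}(\R, H^1_0(\Omega))$; membership in $H^1_0$ rather than merely $H^1$ is preserved because $H^1_0$ is weakly closed in $H^1$. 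A diagonal extraction over an exhaustion by $\omega\Subset\Omega$ yields $u^\eps\to u$ pointwise almost everywhere.

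I then pass to the limit in \eqref{eq:2.1} on $(-T,T)\times\omega$. The linear terms $\pt_t u^\eps$ and $\Delta u^\eps$ converge distributionally from the strong $L^2_{\mathrm{loc}}$ convergence. For the nonlinearity, Sobolev embedding applied to the uniform $H^1$ bound gives $u^\eps$ uniformly bounded in $L^\infty_T L^p(\omega)$ for some $p>2$ (any $p<\infty$ when $d\le 2$, $p=2d/(d-2)$ when $d\ge 3$); together with the elementary estimate $|z\ln(|z|+\eps)|\cleq |z|^{1-\delta}+|z|^{1+\delta}$ (uniform in $\eps\in(0,1]$, constant depending on $\delta>0$ small), this yields equi-integrability of $u^\eps\ln(|u^\eps|+\eps)$ on $(-T,T)\times\omega$. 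Pointwise a.e.\ convergence together with continuity of $z\mapsto z\ln|z|$ at the origin (with the convention $0\cdot\ln 0 := 0$) and Vitali's theorem then deliver $u^\eps\ln(|u^\eps|+\eps)\to u\ln|u|$ in $L^1((-T,T)\times\omega)$. Hence $u$ solves \eqref{eq:1.1} in $\mathcal{D}'((-T,T)\times\omega)$; since $\Delta u\in L^\infty_T H^{-1}(\omega)$ and $u\ln(|u|^2)\in L^\infty_T L^q_{\mathrm{loc}}(\omega)\hookrightarrow L^\infty_T H^{-1}(\omega)$, the equation actually holds in $H^{-1}(\omega)$ for every $t$. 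Strong continuity $u\in C(\R, H^1_0(\Omega))$ then follows from weak $H^1$-continuity (a Lions--Magenes-type consequence of the equation and the a priori bounds) combined with continuity of $t\mapsto \|\nabla u(t)\|_{L^2}^2$, the latter obtained by passing to the limit in the conservation identity $E^\eps(u^\eps(t))=E^\eps(\varphi)$ for the regularized energy, using the uniform integrability of the logarithmic density.

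For uniqueness, let $u_1,u_2\in C(\R, H^1_0(\Omega))$ be two solutions and set $w:=u_1-u_2$. Applying the Sobolev argument globally on $\Omega$ yields $u_i\ln(|u_i|^2)\in L^\infty_T H^{-1}(\Omega)$ and hence $\pt_t w\in C(\R, H^{-1}(\Omega))$. The standard chain rule for $w\in C(\R, H^1_0(\Omega))$ with $\pt_t w\in C(\R, H^{-1}(\Omega))$ gives
\begin{align*}
\frac{d}{dt}\norm[w]_{L^2(\Omega)}^2 = 2\Im\,\bigl\langle -\Delta w-\lambda\(u_1\ln|u_1|^2-u_2\ln|u_2|^2\),\,w\bigr\rangle_{H^{-1},H^1_0}.
\end{align*}
The Dirichlet Laplacian contribution vanishes since $\langle -\Delta w,w\rangle=\norm[\nabla w]_{L^2(\Omega)}^2\in\R$, and the nonlinear contribution is bounded by $4|\lambda|\,\norm[w]_{L^2(\Omega)}^2$ via \eqref{eq:1.2}. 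Gronwall's lemma, combined with $w(0)=0$, yields $w\equiv 0$. The main technical obstacle is the passage to the limit in the logarithmic nonlinearity in the existence step, which is resolved by the combination of Sobolev embedding, the subpolynomial growth of $\ln$, and Vitali's convergence theorem described above; the global interpretation of the pairing in the uniqueness step is more subtle than it looks and relies on the Sobolev controls to place the nonlinearity in $H^{-1}(\Omega)$ globally.
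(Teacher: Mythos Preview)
Your passage to the limit in the nonlinearity via Vitali is correct and matches the sketch the paper takes from \cite{HO}, but two later steps fail when $\Omega$ is unbounded, for the same underlying reason: the logarithmic nonlinearity does not lie in any global dual space. Your uniqueness argument asserts $u_i\ln(|u_i|^2)\in H^{-1}(\Omega)$ via Sobolev, yet on $\{|u_i|\le 1\}$ one only has $|u_i\ln|u_i||\lesssim|u_i|^{1-\delta}\in L^r$ for $r\ge 2/(1-\delta)>2$, and no such exponent is dual to any $q\in[2,2^*]$. A concrete obstruction: take $u\in H^1(\R^d)$ equal to $e^{-k}$ on disjoint balls of volume $e^{2k}/k^2$ (smoothed at the boundaries); then $\int|u|^2\ln|u|=-\sum_k 1/k=-\infty$, so $u\ln|u|\notin H^{-1}(\R^d)$. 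This is precisely why Theorem~\ref{thm:2.2} asserts the equation only in $H^{-1}(\omega)$ for $\omega\Subset\Omega$, and why the paper's uniqueness proof (Lemma~\ref{lem:3.3}) localizes with cutoffs $\zeta_R$: one differentiates $\|\zeta_R(u_1-u_2)\|_{L^2}^2$, controls the commutator term $\nabla(\zeta_R^2)\cdot\nabla(u_1-u_2)$ by $O(1/R)$, applies \eqref{eq:1.2} and Gronwall, and sends $R\to\infty$ via Fatou.

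The same example shows that $\int|\varphi|^2\ln|\varphi|$ can be $-\infty$ for $\varphi\in H^1_0$, so the regularized energy $E^\eps(\varphi)$ diverges as $\eps\downarrow 0$ and there is no uniform integrability of the logarithmic density to exploit; your route to strong $H^1$ continuity via $E^\eps(u^\eps(t))=E^\eps(\varphi)$ therefore breaks down. The approach of \cite{HO} (cf.\ the Kato--Lai argument invoked at the end of Section~\ref{sec:4}) avoids the energy entirely: the bound $\|\nabla u^\eps(t)\|_{L^2}\le e^{2|\lambda t|}\|\nabla\varphi\|_{L^2}$ passes to the weak limit, and once uniqueness is established, time translation gives $\limsup_{s\to 0}\|\nabla u(t_0+s)\|_{L^2}\le\|\nabla u(t_0)\|_{L^2}$ at every $t_0$, which together with weak $H^1$ continuity forces strong continuity.
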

The whole argument holds true for the torus $\T^d$ (not considered in
\cite{HO}), in the same way, and we obtain:
\begin{theorem}
\label{thm:2.3}
  For any $\varphi\in H^1(\T^d)$, there exists a unique solution $u\in
  C(\R,H^1(\T^d))$ to \eqref{eq:1.1}, in the sense of
  \begin{equation*}
    i\d_t u +\Delta u+\lambda u\ln\(|u|^2\)=0\quad\text{in
    }H^{-1}(\T^d) ,
  \end{equation*}
  for all $t\in \R$, and with
  $u_{\mid t=0}=\varphi$. 
\end{theorem}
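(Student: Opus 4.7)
The plan is to follow verbatim the approximation scheme that was used for Theorem~\ref{thm:2.2} in the general domain case, substituting $\T^d$ for $\Omega$. First I would set up the regularized problem \eqref{eq:2.1} with initial datum $\varphi\in H^1(\T^d)$. Since the map $z\mapsto z\ln(|z|+\eps)$ is locally Lipschitz on $\C$ with at most subpolynomial growth at infinity, the standard theory for semilinear Schr\"odinger equations on a compact manifold (via the semigroup $e^{it\Delta}$ and a fixed point in $H^1(\T^d)$, together with a straightforward $H^1$ energy estimate) yields a unique global solution
\[
u^\eps\in C(\R,H^1(\T^d))\cap C^1(\R,H^{-1}(\T^d)).
\]

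Next I would establish the two a priori bounds uniform in $\eps$. Pairing \eqref{eq:2.1} with $\bar u^\eps$ and taking imaginary parts gives conservation of $\|u^\eps(t)\|_{L^2}^2=\|\varphi\|_{L^2}^2$. Differentiating $\|\nabla u^\eps(t)\|_{L^2}^2$ in $t$ and using the single-parameter version of Lemma~\ref{lem:2.1} (as done before Theorem~\ref{thm:2.2}) with Gronwall gives the uniform bound $\|\nabla u^\eps(t)\|_{L^2}^2\le e^{4|\lambda t|}\|\nabla\varphi\|_{L^2}^2$.

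To pass to the limit I would use Lemma~\ref{lem:2.1} to show $\{u^\eps\}_\eps$ is Cauchy in $C_T(L^2(\T^d))$ for every $T>0$: computing $\tfrac{d}{dt}\|u^{\eps_1}-u^{\eps_2}\|_{L^2}^2$, applying Lemma~\ref{lem:2.1} with $(z_1,z_2)=(u^{\eps_1}(x),u^{\eps_2}(x))$, and integrating in $x$ bounds this derivative by
\[
C\|u^{\eps_1}-u^{\eps_2}\|_{L^2}^2 + C|\eps_1-\eps_2|\,\|u^{\eps_1}-u^{\eps_2}\|_{L^1(\T^d)}.
\]
Here the torus geometry actually simplifies matters relative to the general $\Omega$ case: since $\T^d$ has finite measure, $\|\cdot\|_{L^1(\T^d)}\le C\|\cdot\|_{L^2(\T^d)}$, so a cutoff to $L^2_{\rm loc}$ as in \cite{HO} is unnecessary, and Gronwall directly yields the Cauchy property in $C_T(L^2(\T^d))$. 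The strong limit $u\in C(\R,L^2(\T^d))$ combined with the uniform $H^1$ bound gives $u\in L^\infty_{\rm loc}(\R,H^1(\T^d))$; interpolating the strong $L^2$ convergence against this uniform $H^1$ bound promotes $u$ to $C(\R,H^s(\T^d))$ for any $s<1$, and weak continuity plus conservation of mass (together with the limiting energy balance inherited from the $u^\eps$) yields $u\in C(\R,H^1(\T^d))$ in the usual way.

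Finally, because $u^\eps\to u$ in $C_T(L^2(\T^d))$ and the function $z\mapsto z\ln|z|^2$ is continuous from $L^2(\T^d)$ into $L^p(\T^d)$ for $p$ slightly less than $2$ (again using finite measure of the torus), one may pass to the limit in \eqref{eq:2.1} in the distribution sense and identify the limiting equation in $H^{-1}(\T^d)$. Uniqueness of the $H^1(\T^d)$ solution follows by applying \eqref{eq:1.2} to the difference of two solutions with the same data, producing $\tfrac{d}{dt}\|u_1-u_2\|_{L^2}^2\le 4|\lambda|\,\|u_1-u_2\|_{L^2}^2$ and Gronwall. The only conceivable obstacle is checking the standard local existence for the regularized equation on $\T^d$, but since $e^{it\Delta}$ is an $H^s(\T^d)$ isometry and the nonlinearity is globally Lipschitz on $H^1(\T^d)\hookrightarrow L^\infty\cap L^2$ in low dimension, or handled by Strichartz/Brezis--Gallouet in higher dimension, this poses no new difficulty beyond what is already treated for $\Omega\subset\R^d$ in \cite{HO}.
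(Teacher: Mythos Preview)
Your proposal is correct and follows exactly the approach the paper intends: the paper's own ``proof'' of Theorem~\ref{thm:2.3} is the single sentence preceding it, namely that the entire argument for Theorem~\ref{thm:2.2} (the regularization \eqref{eq:2.1}, the uniform $H^1$ bound via Gronwall, the Cauchy property from Lemma~\ref{lem:2.1}, and passage to the limit) carries over verbatim to $\T^d$. Your write-up is in fact more detailed than what the paper provides, and your observation that the finite measure of $\T^d$ lets one work directly in $C_T(L^2(\T^d))$ rather than $C_T(L^2_{\rm loc})$ is a genuine (if minor) simplification over the domain case.
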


%%%%%%%%%%%%
%%%%%%%%%%%%

\section{The Cauchy problem in $L^2$} 
\label{sec:3}

In this section we construct strong $L^2$ solutions to \eqref{eq:1.1}. 
For convenience of notation, here we only consider the case where $\Omega$ is a general domain in $\R^d$. 
%$\Omega\subset\R^d$ is a general domain. 
In the case of the torus $\T^d$, the same argument still works if we replace $H^1_0(\Omega)$ by $H^1(\T^d)$.

%The first problem in order to consider
The first task in order to consider \eqref{eq:1.1} with initial data
$\varphi\in L^2(\Omega)$ is to clarify in what sense the $L^2$ solution satisfies the equation. 
\begin{lemma}
\label{lem:3.1}
Let $u\in L^\infty_T(L^2(\Omega))$ for $T>0$. Then, for any small $\eps>0$ and all $\omega\Subset\Omega$, the nonlinear term satisfies $u\ln(|u|^2)\in L^\infty_T(H^{-\eps}(\omega))$.
\end{lemma}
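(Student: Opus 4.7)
The plan is to write $u\ln(|u|^2)$ as a sum of a part localized to $\{|u|\le 1\}$, where the logarithm is only mildly singular, plus a part supported on $\{|u|>1\}$, where it grows slowly, and then to trade a tiny loss of Lebesgue integrability for a tiny amount of negative Sobolev regularity via the duality of the Sobolev embedding on the bounded set $\omega$. Concretely, for any small $\delta\in(0,1)$ there is $C_\delta>0$ with
\[
|s\ln s^2|\le C_\delta\, s^{1-\delta}\text{ on }[0,1],\qquad |s\ln s^2|\le C_\delta\, s^{1+\delta}\text{ on }[1,\infty).
\]
Writing $u\ln(|u|^2)=F_1+F_2$ with $F_1:=u\ln(|u|^2)\mathbf{1}_{\{|u|\le 1\}}$ and $F_2:=u\ln(|u|^2)\mathbf{1}_{\{|u|>1\}}$ therefore yields the pointwise bounds $|F_1|\le C_\delta|u|^{1-\delta}$ and $|F_2|\le C_\delta|u|^{1+\delta}$.

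Since $\omega\Subset\Omega$ has finite measure, H\"older gives
\[
\|F_1(t)\|_{L^2(\omega)}\le C_\delta|\omega|^{\delta/2}\|u(t)\|_{L^2(\Omega)}^{1-\delta},
\]
while directly from the second bound
\[
\|F_2(t)\|_{L^{2/(1+\delta)}(\Omega)}\le C_\delta\|u(t)\|_{L^2(\Omega)}^{1+\delta},
\]
both uniformly in $t\in(-T,T)$. Given a small target $\eps\in(0,d/2)$, choose $\delta=2\eps/d\in(0,1)$. The standard Sobolev embedding $H^\eps_0(\omega)\hookrightarrow L^{2d/(d-2\eps)}(\omega)$ (obtained via extension by zero and the embedding on $\R^d$) dualizes to $L^{2d/(d+2\eps)}(\omega)\hookrightarrow H^{-\eps}(\omega)$; the relation $2/(1+\delta)=2d/(d+2\eps)$ then places $F_2$ in $L^\infty_T(H^{-\eps}(\omega))$, and since $\omega$ is bounded, the chain $L^2(\omega)\hookrightarrow L^{2d/(d+2\eps)}(\omega)\hookrightarrow H^{-\eps}(\omega)$ does the same for $F_1$.

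The only step requiring genuine care is the calibration between the exponent loss $\delta$ in the pointwise bound and the negative Sobolev index $\eps$: the dual Sobolev exponent forces the precise choice $\delta=2\eps/d$, and both $\delta<1$ and $\eps<d/2$ are automatic once $\eps$ is small, so no dimensional restriction enters. I do not anticipate a serious obstacle: the low-amplitude regime is actually easier than advertised, landing directly in $L^2(\omega)$, and the same decomposition applies verbatim on $\T^d$ since only the local Sobolev embedding is used. The resulting quantitative estimate has the shape
\[
\|u(t)\ln(|u(t)|^2)\|_{H^{-\eps}(\omega)}\le C(\omega,\eps)\bigl(\|u(t)\|_{L^2(\Omega)}^{1-2\eps/d}+\|u(t)\|_{L^2(\Omega)}^{1+2\eps/d}\bigr),
\]
which is uniform in $t\in(-T,T)$ under the assumed $L^\infty_T(L^2)$ hypothesis on $u$.
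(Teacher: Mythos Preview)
Your proposal is correct and follows essentially the same route as the paper: both use the elementary pointwise bound $|u\ln(|u|^2)|\lesssim |u|^{1-\delta}+|u|^{1+\delta}$, handle the sublinear piece in $L^2(\omega)$ via the finite measure of $\omega$, and convert the superlinear piece from $L^{2/(1+\delta)}$ into $H^{-d\delta/2}(\omega)$ by the (dual) Sobolev embedding with the same calibration $\eps=d\delta/2$. The only cosmetic difference is that you split explicitly into $F_1,F_2$ and phrase the Sobolev step as a dual embedding, whereas the paper keeps the two terms together and tests against $\psi\in C_c^\infty(\omega)$; the content is identical.
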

%%%%
\begin{proof}
We note that 
\begin{align*}
\abs[u\ln(|u|^2)]\cleq |u|^{1-\delta}+|u|^{1+\delta},
\end{align*}
for any $\delta\in (0,1)$. Writing, for $\psi$ a test function
supported in $\omega\Subset \Omega$,
\begin{align*} \left|\<u\ln |u|^2,\psi\>\right|&
\lesssim
\int_{\omega} |u|^{1-\delta}|\psi| + \int_{\omega} |u|^{1+\delta}|\psi|\\
&\lesssim \|u\|_{L^{p'(1-\delta)}}^{1-\delta}\|\psi\|_{L^{p}} 
+ \|u\|_{L^{q'(1+\delta)}}^{1+\delta} \|\psi\|_{L^q},
\end{align*}
and considering $p'(1-\delta)=2$, that is $p=\frac{2}{1+\delta}$, and
$q=\frac{2}{1-\delta}$, we get
\begin{align*}
 \left|\<u\ln(|u|^2),\psi\>\right|&\lesssim \|u\|_{L^2}^{1-\delta}
 \|\psi\|_{L^{\frac{2}{1+\delta}}(\omega)} +  \|u\|_{L^2}^{1+\delta}
  \|\psi\|_{L^{\frac{2}{1-\delta}}(\omega)} \\
  &\lesssim \|u\|_{L^2}^{1-\delta} \|\psi\|_{L^2(\omega)} +
    \|u\|_{L^2}^{1+\delta}  \|\psi\|_{H^{d\delta/2}(\omega)},
\end{align*}
where we have used the Sobolev embedding in the last inequality.\footnote{More precisely, we first consider the zero extension of $\psi$, and then apply the Sobolev embedding $H^{d\delta/2}(\R^d)\subset L^{\frac{2}{1-\delta}}(\R^d)$.} Therefore, we deduce that $u\ln(|u|^2)\in H^{-d\delta/2}(\omega)$, which proves the result.
\end{proof}
%%%%%
\begin{remark}
\label{rem:3.2}
If $u\in L^{\infty}_T (L^2(\Omega))$, then the
equation \eqref{eq:1.1} makes sense in the sense of 
\begin{align}
\label{eq:3.1}
i\d_t u+\Delta u+\lambda u \ln(|u|^2)=0\quad\text{in}~H^{-2}(\omega),
\end{align}
for any $\omega\Subset\Omega$ and a.e. $t$. In particular, 
it gives the meaning of the equation in the distribution sense for any $u\in L^\infty_T(L^2(\Omega))$.
\end{remark}
We now recall the following important lemma:
\begin{lemma}[{\cite[Lemma 2.2.1]{CaHa80}}]
\label{lem:3.3}
%L^\infty((-T,T),H^1(\Omega))
Assume that $u, v\in C_T(H^1_0(\Omega))$ satisfies \eqref{eq:1.1}
in the distribution sense. Then, we have
\begin{align}
\label{eq:3.2}
\norm[u(t)-v(t)]_{L^2}\leq e^{2|\lambda t|}\norm[u(0)-v(0)]_{L^2}
\quad\text{for}~t\in[-T,T].
\end{align}
\end{lemma}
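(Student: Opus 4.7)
\medskip
\noindent\textbf{Proof proposal.}
The plan is the classical $L^2$ contraction computation for a difference of two solutions, with the nonlinear term absorbed by the pointwise bound \eqref{eq:1.2}. Set $w\ce u-v$, so that subtracting the two equations yields
\begin{equation*}
 i\d_t w +\Delta w +\lambda\bigl(u\ln(|u|^2)-v\ln(|v|^2)\bigr)=0
\end{equation*}
in the distribution sense on $(-T,T)\times\omega$ for any $\omega\Subset\Omega$. Since $u,v\in C_T(H^1_0(\Omega))$, we have $w\in C_T(H^1_0(\Omega))$, and the elementary bound $|z\ln(|z|^2)|\lesssim |z|^{1-\delta}+|z|^{1+\delta}$ combined with $H^1_0(\Omega)\hookrightarrow L^p_{\rm loc}$ shows that $u\ln(|u|^2)-v\ln(|v|^2)$ belongs to $L^\infty_T(L^2_{\rm loc}(\Omega))$. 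Consequently $\d_t w\in L^\infty_T(H^{-1}_{\rm loc}(\Omega))$, and the formal computation below can be made rigorous by standard density/regularization arguments, e.g. by pairing the equation against a cutoff times $\bar w$ and then passing to the limit, exactly as in the classical arguments recalled in Section~\ref{sec:2}.

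First I would compute $\frac{d}{dt}\norm[w(t)]^2_{L^2}$ by pairing the equation for $w$ with $\bar w$ and taking imaginary parts. The Laplacian term contributes
\begin{equation*}
\Im\tbra[\Delta w,w]=-\Im\norm[\nabla w]^2_{L^2}=0,
\end{equation*}
which uses crucially that $w\in H^1_0(\Omega)$ so that the integration by parts produces no boundary terms; this is the step where the assumption $u,v\in C_T(H^1_0(\Omega))$ is used in an essential way, rather than merely $C_T(L^2)$.

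Next I would control the nonlinear contribution with \eqref{eq:1.2}. Since $\ln(|z|^2)=2\ln|z|$, the pointwise inequality gives
\begin{equation*}
\bigl|\Im\bigl[\overline{w(x)}\bigl(u(x)\ln(|u(x)|^2)-v(x)\ln(|v(x)|^2)\bigr)\bigr]\bigr|\leq 2|w(x)|^2
\end{equation*}
for a.e. $x\in\Omega$. Integrating over $\Omega$, I obtain
\begin{equation*}
\frac{1}{2}\frac{d}{dt}\norm[w(t)]^2_{L^2}
=\lambda\,\Im\!\int_\Omega \overline{w}\bigl(u\ln(|u|^2)-v\ln(|v|^2)\bigr)dx
\leq 2|\lambda|\,\norm[w(t)]^2_{L^2}.
\end{equation*}

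Finally, Gronwall's inequality applied to $t\mapsto\norm[w(t)]^2_{L^2}$ yields
\begin{equation*}
\norm[w(t)]^2_{L^2}\leq e^{4|\lambda t|}\norm[w(0)]^2_{L^2},
\end{equation*}
and taking square roots produces \eqref{eq:3.2}. The main obstacle is not the algebraic computation, which is short once \eqref{eq:1.2} is available, but the rigorous justification of $\frac{d}{dt}\norm[w(t)]^2_{L^2}=2\Re\tbra[\d_t w,w]$: the equation only holds locally in $H^{-1}(\omega)$, while $\bar w$ is a global $H^1_0$ function, so one must either invoke a suitable exhaustion by $\omega\Subset\Omega$ together with a cutoff argument, or, equivalently, mollify in time and space, exploit the vanishing of the Laplacian contribution for $H^1_0$ test functions, and pass to the limit using the $C_T(H^1_0)$ regularity of $u$ and $v$.
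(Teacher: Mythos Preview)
Your proposal is correct and follows essentially the same route as the paper: differentiate $\norm[u-v]_{L^2}^2$ in time, kill the Laplacian contribution by integration by parts, control the nonlinear term pointwise via \eqref{eq:1.2}, and close with Gronwall. The paper carries out explicitly the cutoff/exhaustion step you describe at the end (working with $\norm[\zeta_R(u-v)]_{L^2}^2$ for $\zeta_R(\cdot)=\zeta(\cdot/R)$, picking up a harmless $C(M)/R$ error from the commutator with $\nabla$, and then sending $R\to\infty$ via Fatou), which is precisely the justification you flag as the ``main obstacle''; otherwise the arguments coincide.
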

%%%%
\begin{proof}
For completeness, we give a proof of this result. To simplify the presentation, we
consider only the case $\Omega=\R^d$. We set
\begin{align*}
M\ce \max\left\{ \norm[u]_{C_T(H^1)},  \norm[v]_{C_T(H^1)}\r\}.
\end{align*}
We note that $u,v$ satisfy the equation in the sense of 
\begin{align}
\label{eq:3.3}
i\pt_t u+\Delta u+\lambda u\ln(|u|^2)=0\quad\text{in}~H^{-1}(B_R)
\end{align}
for any $R>0$ and for all $t\in\R$, where $B_R$ is the open ball of radius $R$ with center at the origin of $\R^d$.
Take a function $\zeta\in C^{\infty}_c(\R^d)$ satisfying
\begin{align*}
\zeta (x)=
\left\{
\begin{aligned}
&1&&\text{if}~|x|\leq 1,
\\
&0 &&\text{if}~|x|\geq 2,
\end{aligned}
\r.
\qquad 0\leq\zeta (x)\leq 1\quad\text{for all}~x\in\R^d.
\end{align*}
We set $\zeta_R(\cdot)=\zeta(\cdot/R)$ for $R>0$.
%%%%%%
It follows from \eqref{eq:3.3} and Lemma~\ref{lem:2.1} (with
$\eps_1=\eps_2=0$) that 
\begin{align*}
\frac{d}{dt}\norm[\zeta_R(u-v)]_{L^2}^2
&=2\Im\tbra[i\zeta_R^2\pt_t(u-v),u-v]_{H^{-1}(B_{2R}), H^1_0(B_{2R})}
\\
&=
\begin{aligned}[t]
&2\Im\rbra[\nabla(\zeta_R^2)\nabla(u-v),u-v]_{L^2}
\\
&{}-4\lambda\Im\rbra[\zeta_R^2
\left(u\ln |u|-v\ln|v|\r),
u-v]_{L^2}
\end{aligned}
\\
&\leq 
\frac{C(M)}{R}+4\abs[\lambda]\norm[\zeta_R(u-v)]_{L^2}^2.
\end{align*}
Integrating the last inequality over $[0,t]$, and applying Gronwall's lemma, 
\begin{align*}
\norm[\zeta_R(u-v)(t)]_{L^2}^2\leq e^{4|\lambda t|} \(\norm[u(0)-v(0)]_{L^2}^2+\frac{C(M)}{R}T \),
\end{align*}
for all  $t\in (-T,T)$. Applying Fatou's lemma, 
\begin{align*}
\norm[u(t)-v(t)]_{L^2}^2
\leq\liminf_{R\to\infty}\norm[\zeta_R(u-v)(t)]_{L^2}^2\leq
e^{4|\lambda t|} \norm[u(0)-v(0)]_{L^2}^2,
%\quad\forall t\in (-T,T).
\end{align*}
which proves \eqref{eq:3.2}.
\end{proof}
%%%%
\begin{remark}
\label{rem:3.4}
 For the proof of Lemma \ref{lem:3.3}, we need to assume
 $H^1$ solutions to give a sense of the duality product. Note,
 however, that \eqref{eq:3.2} remains meaningful for
 $L^2$ solutions. 
\end{remark}
 
%, the initial condition in \eqref{eq:1.1}, with $\varphi_{\mid \d \Omega}=0$ when $\Omega$ is such that\eqref{eq:1.1} is considered together with zero Dirichlet boundarycondition. 
Now we take $\varphi\in L^2(\Omega)$ as the initial data.
% and approximate this by the $H^1$ initial data.  
Take $\{\varphi_n\}\subset H^1_0(\Omega)$ such that $\varphi_n\to
\varphi~\text{in}~L^2(\Omega)$. We know from Theorem~\ref{thm:2.2}
that there exists a unique 
solution $u_n\in C(\R, H^1_0(\Omega))$ of \eqref{eq:1.1} with
$u_n(0)=\varphi_n$. Then, it follows from
\eqref{eq:1.1} that $\{u_n\}$ forms a Cauchy sequence in 
$L^\infty_{\rm loc}(\R, L^2(\Omega))$. Therefore we deduce that there exists $u\in
C(\R,L^2(\Omega))$ such that 
\begin{align*}
u_n \to u\quad \text{in}~L^\infty_{\rm loc}(\R, L^2(\Omega)).
\end{align*}

The rest of the proof consists in verifying that $u$ is an
$L^2$ solution in the above sense. 
\begin{lemma}
\label{lem:3.5}
For any $\omega\Subset\Omega$ and for any $\eps>0$, we have
\begin{align*}
g(u_n) \to g(u)\quad\text{in}~L^\infty_{\rm loc}(\R, H^{-\eps}(\omega)),
\end{align*}
where we have set $g(z)=z\ln\(|z|^2\)$. 
\end{lemma}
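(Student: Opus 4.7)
The plan is to imitate the argument in the proof of Lemma~\ref{lem:3.1}, now applied to the difference $g(u_n)-g(u)$. The key ingredient is a pointwise inequality for $g$, combined with the uniform $L^2$ convergence of $\{u_n\}$ already established.

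I would first establish the pointwise bound: for every $\delta\in(0,1)$ there exists $C_\delta>0$ such that
\[
|g(z_1)-g(z_2)|\le C_\delta\bigl(1+|z_1|^{2\delta}+|z_2|^{2\delta}\bigr)|z_1-z_2|^{1-\delta}\qquad\text{for all }z_1,z_2\in\C.
\]
To prove this, split according to whether $|z_1-z_2|\ge\max(|z_1|,|z_2|)$ or not. In the first (degenerate) case, the open segment from $z_1$ to $z_2$ may cross the origin, but $|z_j|\le|z_1-z_2|$ so the scalar bound $|g(z_j)|\lesssim|z_j|^{1-\delta}+|z_j|^{1+\delta}$ (already used in Lemma~\ref{lem:3.1}) is enough. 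In the second case the segment stays away from the origin, so integrating $Dg$ along it and using the elementary bound $|Dg(z)|\lesssim 1+|\log|z||\lesssim|z|^{-\delta}+|z|^\delta$ yields the claim.

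Having this inequality in hand, I would test against an arbitrary $\psi\in C^\infty_c(\omega)$ and apply H\"older with the exponents $(1/\delta,\,2/(1-\delta),\,2/(1-\delta))$, which sum to one:
\[
|\langle g(u_n)-g(u),\psi\rangle|\lesssim\bigl(|\omega|^{\delta}+\|u_n\|_{L^2(\omega)}^{2\delta}+\|u\|_{L^2(\omega)}^{2\delta}\bigr)\|u_n-u\|_{L^2(\omega)}^{1-\delta}\|\psi\|_{L^{2/(1-\delta)}(\omega)}.
\]
Extending $\psi$ by zero to $\R^d$ and invoking the Sobolev embedding $H^{d\delta/2}(\R^d)\hookrightarrow L^{2/(1-\delta)}(\R^d)$, exactly as in the proof of Lemma~\ref{lem:3.1}, the last factor is replaced by $\|\psi\|_{H^{d\delta/2}(\omega)}$, which yields a bound on $\|g(u_n)-g(u)\|_{H^{-d\delta/2}(\omega)}$ by the same right-hand side without the $\psi$-factor.

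Finally, given $\eps>0$, I pick $\delta\in(0,1)$ so small that $d\delta/2<\eps$, which provides the continuous embedding $H^{-d\delta/2}(\omega)\hookrightarrow H^{-\eps}(\omega)$. Taking the supremum over $t\in[-T,T]$ and using that $u_n\to u$ in $L^\infty_T(L^2(\Omega))$ (so that $\{u_n\}$ is bounded in $L^\infty_T L^2$ and $\|u_n-u\|_{L^\infty_T L^2}\to 0$), the right-hand side vanishes as $n\to\infty$, giving the claimed convergence in $L^\infty_{\rm loc}(\R,H^{-\eps}(\omega))$. The main obstacle is the pointwise inequality from the first step, in particular the case when the segment joining $z_1$ and $z_2$ approaches or crosses the origin; once this is settled the remainder is a direct H\"older--Sobolev computation inherited from Lemma~\ref{lem:3.1}.
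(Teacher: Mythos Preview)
Your strategy is valid and yields the lemma, but there is a slip in the case analysis for the pointwise bound. With the split at $\max(|z_1|,|z_2|)$, your second case does not keep the segment uniformly away from the origin: for instance $z_1=1$, $z_2=\eta$ with $\eta>0$ small satisfies $|z_1-z_2|=1-\eta<\max$, yet the segment reaches $|z|=\eta$, so integrating $|Dg(z)|\lesssim |z|^{-\delta}+|z|^\delta$ produces a factor $\eta^{-\delta}$ that is not controlled by $(1+|z_1|^{2\delta}+|z_2|^{2\delta})|z_1-z_2|^{1-\delta}$. The remedy is to split at (a constant multiple of) $\min(|z_1|,|z_2|)$ instead. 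If $|z_1-z_2|\ge\tfrac12\min(|z_1|,|z_2|)$, the triangle inequality forces $|z_j|\le 3|z_1-z_2|$ for both $j$, and your ``degenerate'' bound applies. If $|z_1-z_2|<\tfrac12\min(|z_1|,|z_2|)$, every point $z$ on the segment satisfies $|z|>\tfrac12\min(|z_1|,|z_2|)>|z_1-z_2|$, whence $|z_1-z_2|\sup|z|^{-\delta}<|z_1-z_2|^{1-\delta}$ and the mean-value argument goes through. With this correction the remainder of your proof (H\"older with exponents $1/\delta,\,2/(1-\delta),\,2/(1-\delta)$ and the Sobolev embedding $H^{d\delta/2}\hookrightarrow L^{2/(1-\delta)}$) is correct.

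By comparison, the paper avoids a single unified pointwise inequality by decomposing $g=g_1+g_2$ through a cut-off $\theta\in C^1_c(\C)$ supported in $\{|z|\le 2\}$: the piece $g_1=\theta\, g$ is $\alpha$-H\"older for every $\alpha<1$, so $g_1(u_n)\to g_1(u)$ directly in $L^\infty_{\mathrm{loc}}(\R,L^2(\omega))$; the piece $g_2=(1-\theta)g$, supported in $\{|z|\ge 1\}$, satisfies $|g_2(z)-g_2(w)|\lesssim(\ln^+|z|+\ln^+|w|)|z-w|$, and the same duality/Sobolev argument you use handles it. The paper's route keeps the elementary inequalities in each regime standard and short; your route merges the two regimes into one estimate, trading a slightly more delicate pointwise bound for a single H\"older step.
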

%%%%%%%%
\begin{proof}
We take a function $\theta\in C^{1}_c(\C,\R)$ satisfying
\begin{align*}
\theta (z)=
\left\{
\begin{aligned}
&1&&\text{if}~|z|\leq 1,
\\
&0 &&\text{if}~|z|\geq 2,
\end{aligned}
\r.
\qquad 0\leq\theta (z)\leq 1\quad\text{for}~z\in\C.
\end{align*}
We set 
\begin{align*}
g_1(u)=\theta(u)g(u), \quad g_2(u)=(1-\theta(u))g(u).
\end{align*}
We note that for $\alpha\in(0,1)$,
\begin{align}
\label{eq:3.4}
\abs[g_1(z)-g_1(w)]&\cleq_\alpha |z-w|^\alpha,
%\quad (\alpha\in(0,1)),
\\
\label{eq:3.5}
\abs[g_2(z)-g_2(w)]&\cleq \(\ln^+|z|+\ln^+|w|\r)|z-w|,
\end{align}
for any $z,w\in\C$. It follows from \eqref{eq:3.4} that
\begin{align}
\label{eq:3.6}
g_1(u_n)\to g_1(u)\quad \text{in}~L^\infty_{\rm loc}(\R, L^2(\omega)).
\end{align}

Regarding the convergence of $g_2(u_n)$, we use the argument in the proof of Lemma \ref{lem:3.1}.
 For $\psi\in C^1_c(\Omega)$ and any $\delta>0$, we obtain
from \eqref{eq:3.5} that 
\begin{align*}
\abs[\int (g_2(u_n)-g_2(u))\psi]&\cleq\int (|u_n|^\delta+|u|^\delta)|u_n-u|\abs[\psi]
\\
&\cleq\(\norm[u_n]_{L^2}^\delta+ \norm[u]_{L^2}^\delta\r)\norm[u_n-u]_{L^2}\norm[\psi]_{L^{\frac{2}{1-\delta}} },
\end{align*}
where we have used H\"older inequality with the exponent relation
\begin{align*}
\frac{\delta}{2}+\frac{1}{2}+\frac{1-\delta}{2}=1.
\end{align*}
Thus, we obtain
\begin{align*}
\abs[\int (g_2(u_n)-g_2(u))\psi]&\leq C(\norm[\varphi]_{L^2})\norm[u_n-u]_{L^2}\norm[\psi]_{H^\eps},
\end{align*}
where we take $\eps=d\delta/2$ by the Sobolev embedding. Therefore, we deduce that
\begin{align*}
\norm[g_2(u_n)-g_2(u)]_{H^{-\eps}}\cleq \norm[u_n-u]_{L^2},
\end{align*}
which implies that
\begin{align}
\label{eq:3.7}
g_2(u_n)\to g_2(u)\quad \text{in}~L^\infty_{\rm loc}(\R, H^{-\eps}(\Omega)).
\end{align}
Hence, the result follows from \eqref{eq:3.6} and \eqref{eq:3.7}.
\end{proof}
%%%
We recall that $u_n$ satisfies
\begin{equation}\label{eq:3.8}
  i\d_t u_n +\Delta u_n+\lambda u_n\ln\(|u_n|^2\)=0  \quad
  \text{in }H^{-1}(\omega),
\end{equation}
for all $\omega\Subset \Omega$. 
We now fix $\omega\Subset \Omega$, and take $\psi\in C^{\infty}_c(\omega)$ and $\phi\in C^1_c(\R)$. It follows from \eqref{eq:3.8} that
\begin{align*}
\int_\R \rbra[i u_n,\psi]_{L^2}\phi'(t)dt
&=-\int_\R \tbra[i\pt_t u_n,\psi]_{H^{-1},H^1_0}\phi(t)dt
\\
&=\int_\R\( \rbra[u_n,\Delta\psi]_{L^2}+\tbra[\lambda
     g(u_n),\psi]_{H^{-1}, H^1_0} \) \phi(t)dt.
\end{align*}
Passing to the limit as $n\to\infty$, we obtain from Lemma \ref{lem:3.5} that
\begin{align*}
%%label{eq:1.13}%
\int_\R \rbra[i u,\psi]_{L^2}\phi'(t)dt
&=\int_\R\( \rbra[ u,\Delta\psi]_{L^2}-\tbra[\lambda g(u),\psi]_{H^{-1}, H^1_0} \) \phi(t)dt
\\
&=\int_\R\tbra[ \Delta u-\lambda g(u),\psi]_{H^{-2}, H^2_0}\phi(t)dt.
\end{align*}
It is easily verified from this formula that 
\begin{align*}
u\in C(\R,L^2(\Omega))\cap C^1(\R, H^{-2}(\omega)),
\end{align*}
for any $\omega\Subset\Omega$, and $u$ satisfies \eqref{eq:3.1} for any
$\omega\Subset\Omega$ and all $t\in\R$. 
 
The $L^2$ solution just constructed can be regarded as a unique extension of the solution map in $H^1$. 
We define the solution map from $H^1$ initial data by
\begin{align}
\label{eq:3.9}
\Phi: H^1_0(\Omega)\ni\varphi \mapsto u\in C(\R, H^1_0(\Omega)).
\end{align} 
For any $\varphi\in L^2(\Omega)$, we take a sequence $\{\varphi_n\}\subset H^1_0(\Omega)$ such
that $\varphi_n\to\varphi$ in $L^2(\Omega)$ and define 
\begin{align*}
\Phi(\varphi)=\lim_{n\to\infty}\Phi(\varphi_n) \in C(\R, L^2(\Omega)).
\end{align*}
From the above discussion, $\Phi(\varphi)$ yields an $L^2$ solution of
\eqref{eq:1.1}. We note from \eqref{eq:3.2} that $\Phi(\varphi)$ is
defined independently of the approximate sequence
$\varphi_n\to\varphi$.  
Therefore the solution map \eqref{eq:3.9} is uniquely extended from
$H^1_0(\Omega)$ to $L^2(\Omega)$, and hence the first part of
Theorem~\ref{thm:1.1} follows.

%%%%%%%%%%%%
%%%%%%%%%%%%

\section{Intermediate Sobolev regularity}
\label{sec:4}

To prove the last part of Theorem~\ref{thm:1.1}, we show that the
flow map associated to \eqref{eq:2.1} propagates
$H^s$ regularity for $s\in (0,1)$, uniformly in $\eps\in (0,1]$. For domains $\Omega\subset\R^d$ the fractional Sobolev spaces
space $H^s(\Omega)$ may be defined either by real/complex interpolation
between $L^2(\Omega)$ and $H^1_0(\Omega)$. When $\Omega=\R^d$, it is well known that the Bessel potential spaces by \eqref{eq:1.4} are characterized by complex interpolation as
\begin{align*}
H^s(\R^d)=[L^2(\R^d), H^1(\R^d)]_{s},\quad s\in(0,1). 
\end{align*}
%(equivalently, in terms of the
%fractional Laplacian), or by real interpolation, which consists in
%considering $H^s(\Omega)$ as the completion of $C^\infty_c(\Omega)$
%for the norm whose square is given by
%\begin{equation}%label{eq:4.1}%
%\|f\|_{L^2(\Omega)}^2+ \iint_{\Omega\times\Omega}
%\frac{|f(x+y)-f(x)|^2}{|y|^{d+2s}}dxdy.  
%\end{equation}
%%%
If $\Omega$ is the whole space, a half
space, or a smooth bounded domain with bounded boundary, \cite[Theorem~7.48]{Adams} states
that the fractional Sobolev spaces defined by real interpolation are
equivalent to the ones equipped with the norm
\eqref{eq:1.3}.
%Regarding the last case 
If $\Omega=\T^d$, it is also
known that a similar equivalence relation holds as follows,
as can be proven essentially by replacing Plancherel's identity in the case of $\R^d$ (in $x$, in \eqref{eq:4.1}
  below) 
  with Parseval's identity on $\T^d$. 
\begin{lemma}[{\cite[Proposition 1.3]{BO13}}]
\label{lem:4.1}
Let $s\in(0,1)$. Then, for $f\in H^s(\T^d)$ we have the relation 
  \begin{align*}
  \norm[f]_{H^s(\T^d)}^2 \sim \norm[f]_{L^2(\T^d)}^2+
  \iint_{\T^d\times[-\frac{1}{2},\frac{1}{2})^d} \frac{|f(x+y)-f(x)|^2}{|y|^{d+2s}}dxdy.
  % \int_{\T^d}\int_{[-\frac{1}{2},\frac{1}{2})^d} 
  \end{align*}
\end{lemma}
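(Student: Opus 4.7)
The plan is to adapt the standard $\R^d$ argument, substituting Parseval's identity on the torus for Plancherel's identity, as the paragraph preceding the lemma already hints. First I would apply Parseval in the $x$-variable,
\begin{align*}
\int_{\T^d}|f(x+y)-f(x)|^2\,dx = \sum_{n\in\Z^d}|\hat f(n)|^2\,|e^{2\pi i y\cdot n}-1|^2,
\end{align*}
and integrate against $|y|^{-d-2s}$ over $y\in[-\tfrac12,\tfrac12)^d$ to express the Gagliardo-type double integral as
\begin{align*}
\iint_{\T^d\times[-\frac12,\frac12)^d}\frac{|f(x+y)-f(x)|^2}{|y|^{d+2s}}\,dx\,dy = \sum_{n\in\Z^d\setminus\{0\}}|\hat f(n)|^2\,I(n),
\end{align*}
where $I(n):=\int_{[-\frac12,\frac12)^d}|e^{2\pi i y\cdot n}-1|^2\,|y|^{-d-2s}\,dy$ and $I(0)=0$. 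The whole problem is thereby reduced to the pointwise equivalence $I(n)\sim|n|^{2s}$ for $n\in\Z^d\setminus\{0\}$, uniformly in $n$; granting this, summing against $|\hat f(n)|^2$ and using the elementary comparison $(1+4\pi^2|n|^2)^s\sim 1+|n|^{2s}$ together with Parseval identifies the right-hand side of the stated equivalence with the Fourier-defined norm~\eqref{eq:1.5}.

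For the upper bound $I(n)\cleq|n|^{2s}$ I would split the cube at $|y|=1/|n|$: on $\{|y|\le 1/|n|\}$ use the Lipschitz estimate $|e^{2\pi i y\cdot n}-1|\le 2\pi|y||n|$, and on its complement use the trivial bound by $2$; the two radial integrations match exactly at the power $|n|^{2s}$. For the lower bound $I(n)\cgeq|n|^{2s}$ I would use $|e^{2\pi i\theta}-1|=2|\sin(\pi\theta)|\cgeq|\theta|$ on $|\theta|\le 1/2$, restrict $y$ to the ball $\{|y|\le 1/(2|n|)\}$, which is contained in the cube since $|n|\ge 1$, and evaluate the rotationally invariant integral
\begin{align*}
\int_{|y|\le R}\frac{(y\cdot n)^2}{|y|^{d+2s}}\,dy \;\sim\; |n|^2\,R^{2-2s},
\end{align*}
with $R=1/(2|n|)$, which yields $\cgeq |n|^{2s}$.

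The main obstacle is that the clean dilation $y\mapsto z/|n|$ that collapses the $\R^d$ analogue of $I(n)$ to an explicit constant multiple of $|n|^{2s}$ no longer preserves the domain on the torus: it maps $[-\tfrac12,\tfrac12)^d$ onto the $n$-dependent box $[-|n|/2,|n|/2)^d$, so one cannot read off the exact power from a single change of variables. My approach circumvents this by settling for two-sided bounds with constants uniform in $n$ rather than an exact identity. The uniformity is clean because $n\in\Z^d\setminus\{0\}$ forces $|n|\ge 1$, so the splitting radius $1/|n|$ stays in $(0,1]$ and both the upper-bound annulus and the lower-bound ball lie genuinely inside the cube.
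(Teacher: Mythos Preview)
Your proposal is correct and follows precisely the route the paper indicates: the lemma is quoted from \cite{BO13} without proof, and the paper only hints that one should replace Plancherel's identity in the $\R^d$ argument by Parseval's identity on $\T^d$, which is exactly what you do. Your reduction to the multiplier estimate $I(n)\sim |n|^{2s}$ and the split at scale $1/|n|$ constitute a standard and complete verification of that hint.
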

%%%
We denote the approximate nonlinearity by
\begin{equation*}
  g_\eps(z)= 2z\ln\(|z|+\eps\) \quad\text{for}~\eps\in(0,1]. 
\end{equation*}
If $\Omega=\R^d$, 
by changing variables in \eqref{eq:1.3}, the $H^s$ norm  can be
rewritten as
  \begin{equation}\label{eq:4.1}
%      \norm[u(t)]_{H^s(\R^d)}^2 =\norm[u(t)]_{L^2(\R^d)}^2+
%  \iint_{\R^d\times\R^d} \frac{|u(t,x+y)-u(t,x)|^2}{|y|^{d+2s}}dxdy.
      \norm[f]_{H^s(\R^d)}^2 =\norm[f]_{L^2(\R^d)}^2+
  \iint_{\R^d\times\R^d} \frac{|f(x+y)-f(x)|^2}{|y|^{d+2s}}dxdy,
  \end{equation}
for $f\in H^s(\R^d)$.
Then, in view of the conservation of mass, we obtain 
\begin{align*}
  &\frac{d}{dt}\|u^\eps(t)\|_{ H^s(\R^d)}^2 
  \\
  ={}& 2\RE
  \iint_{\R^d\times\R^d} \overline{
   \( u^\eps(t,x+y)-u^\eps(t,x) \)}\d_t\(  u^\eps(t,x+y)-u^\eps(t,x)\) \frac{dxdy}{|y|^{d+2s}}
   \\ 
  ={}& 
\begin{aligned}[t]
 &-2\IM\iint_{\R^d\times\R^d} \overline{ \(
    u^\eps(t,x+y)-u^\eps(t,x) \)} \( \Delta u^\eps(t,x+y)-\Delta u^\eps(t,x)\)
 \frac{dxdy}{|y|^{d+2s}} \\ 
   &{}-2\lambda \IM\iint_{\R^d\times\R^d}
  \overline{
    \(u^\eps(t,x+y)-u^\eps(t,x) \)}\(g_\eps\( 
    u^\eps(t,x+y)\) - g_\eps\( u^\eps(t,x)\)\) \frac{dxdy}{|y|^{d+2s}}.
\end{aligned}
\end{align*}
The first term on the right hand side is zero by integration by parts
in $x$.
For the second term, by applying Lemma~\ref{lem:2.1} we obtain
\begin{align*}
  &\frac{d}{dt}\|u^\eps(t)\|_{H^s(\R^d)}^2 
  \\
  \leq {}&2|\lambda|
  \iint_{\R^d\times\R^d} \left|\IM \left[ \overline{
  \(u^\eps(t,x+y)-u^\eps(t,x) \)}\(g_\eps\(  u^\eps(t,x+y)\) - g_\eps\(
  u^\eps(t,x)\)\) \right] \right|\frac{dxdy}{|y|^{d+2s}}
  \\
  \leq{}&4|\lambda|\iint_{\R^d\times\R^d} \left|
    u^\eps(t,x+y)-u^\eps(t,x)\right|^2 \frac{dxdy}{|y|^{d+2s}}
    \leq 4\abs[\lambda]\norm[u^\eps(t)]_{H^s(\R^d)}^2. 
\end{align*}
%where the implicit multiplicative constant depends neither on $t$ nor on $\eps$. 
Therefore, by Gronwall's lemma we deduce
\begin{equation}
\label{eq:4.2}
  \|u^\eps(t)\|_{H^s(\R^d)}^2\leq e^{4|\lambda t|}\|\varphi\|_{H^s(\R^d)}^2
  \quad\text{for all}~t\in\R.
\end{equation}

  If $\Omega=\R^d_+$, we extend $u(t,\cdot)$ to $\R^d$ by symmetry,
  according to \cite[Remark~2.7.2]{CazCourant}: introduce $\tilde
  \varphi$, defined on $\R^d$ by
  \begin{equation*}
    \tilde \varphi(x) =
    \left\{
      \begin{aligned}
       &~\varphi(x_1,\dots ,x_d) &&\text{if }x_d>0,\\
      &{-}\varphi(x_1,\dots ,-x_d) &&\text{if }x_d<0.
      \end{aligned}
    \right.  
  \end{equation*}
By uniqueness, $u$ is the restriction to $\R^d_+$ of the solution
$\tilde u$ to \eqref{eq:1.1} on $\R^d$ with initial datum $\tilde
\varphi$,  as we check that $\tilde
  u(t,x_1,\dots,x_d)= -\tilde
  u(t,x_1,\dots,-x_d)$ a.e., so the Dirichlet
  boundary condition is satisfied. Therefore, the desired estimates for the solution $u$ follow from \eqref{eq:4.2} for the extended solution $\tilde u$.
%up to replacing $u$ with $\tilde u$, we can use \eqref{eq:4.1}.

If $\Omega=\T^d$, by Lemma \ref{lem:4.1} one can define the equivalent $H^s$ norm by
\begin{align*}
 \|f\|_{ \tilde{H}^s(\T^d) }^2 =\|f\|_{L^2(\T^d)}^2+  \iint_{\T^d\times[-\frac{1}{2},\frac{1}{2})^d}
\frac{|f(x+y)-f(x)|^2}{|y|^{d+2s}}dxdy.
\end{align*}
Using this norm, as in the case of $\R^d$ we obtain
\begin{align}
\label{eq:4.3}
 \|u^\eps(t)\|_{\tilde{H}^s(\T^d)}^2\leq e^{4|\lambda t|}\|\varphi\|_{\tilde{H}^s(\T^d)}^2
  \quad\text{for all}~t\in\R.
\end{align}
%%%
%a similar estimates is obtained through
%Lemma~\ref{lem:4.1} as
%for some $C>0$ independent of $\eps$ and $t$.
%The last part of Theorem~\ref{thm:1.1} follows, 
%%%
\indent
In view of the construction presented in Section~\ref{sec:3}, it follows from \eqref{eq:4.2}, \eqref{eq:4.3}, and limiting procedures that $\Phi(\varphi)\in (C_w\cap L^\infty_{\rm loc})(\R, H^s(\Omega))$ when $\varphi\in H^s(\Omega)$, where $\Omega\in \{\R^d,\R^d_+,\T^d\}$. Applying the argument of \cite[Remarks (c)]{KL84}, one can improve the regularity in time as $\Phi(\varphi)\in C(\R, H^s(\Omega))$ (see the proof of \cite[Lemma 2.11]{HO} for more details). This completes the proof of the last part of Theorem~\ref{thm:1.1}.

%\section*{Added references}
%
%cubic NLS, mKdV \cite{HKVpre},  Kato-Lai \cite{KL84}, Sobolev on Torus \cite{BO13}
%stronger illposedness results \cite{CCTpre, CDS12, Ki18}

\begin{remark}[More general domains]
  If $\Omega$ is a smooth domain with bounded boundary, \eqref{eq:1.3}
  cannot be replaced with \eqref{eq:4.1}, and adapting the
  previous  computation  is a delicate issue. As a matter of fact, even in
  the linear case $\lambda=0$, the conservation of the $\dot H^s$ norm for
  $0<s<1$ is unclear. Typically, one may commute the fractional
  Laplacian $(-\Delta)^{s/2}$ with the (linear) Schr\"odinger
  equation, and be tempted to invoke the conservation of the $L^2$
  norm. However, the boundary condition verified by $(-\Delta)^{s/2}u$
  is unclear, and the integration by parts needed to prove the
  conservation of the $L^2$ norm of $(-\Delta)^{s/2}u$ is not obvious. 
\end{remark}

\appendix
\section{Weak solutions in the sense of Brezis}
\label{sec:A}

%For the convenience of the reader, we first clarify this statement. 
In this section, we state the content of \cite[Theorem 1.2 b)]{CaHa80} in a mostly self-contained manner. The construction of their $L^2$ weak solutions depend on the maximal monotone theory. We define the nonlinear operator $A$ by
\begin{align*}
Au=-i\Delta u-i\lambda u\ln(|u|^2)
%\quad \text{for}~u\in D(A)
\end{align*}
with the domain
\begin{align*}
D(A)=\left\{ u\in H^1_{\rm loc}(\R^d)\cap L^2(\R^d) : 
\Delta u +\lambda u\ln(|u|^2)\in L^2(\R^d) 
\r\}.
\end{align*}
From \cite[Theorem 1.1]{CaHa80} we know that $A+2|\lambda| I$ is maximal monotone in $L^2(\R^d)$. Note that the inequality \eqref{eq:1.2} is used to show the monotonicity of $A$. We now rewrite the equation \eqref{eq:1.1} as 
\begin{align}
\label{eq:A.1}
\frac{du}{dt} +(A+2|\lambda|)u=2|\lambda| u,
%\quad\text{in}~L^2(\R^d),
\quad u(0)=\varphi.
\end{align}
The definition of weak solutions in the sense of Brezis is given as follows. 
\begin{definition}[{\cite[Definition 3.1]{Brezis}}]
\label{def:A.1}
Let $A$ be a maximal monotone operator on the Hilbert space $H$. Assume $f\in L^1((0,T),H)$ for some $T>0$. We say that $u\in C([0,T],H) $ is a weak solution of the equation $\frac{du}{dt}+Au=f$ if there exists sequences $f_n\in L^1((0,T),H)$ and $u_n\in C([0,T],H)$ such that $u_n$ satisfies $\frac{du_n}{dt}+Au_n=f_n$ for a.e. $t\in (0,T)$, $f_n\to f~\text{in}~ L^1((0,T),H)$, and $u_n\to u~\text{in}~C([0,T],H)$.
\end{definition}
The authors in \cite{CaHa80} apply \cite[Theorem 3.17]{Brezis} to the equation \eqref{eq:A.1} and prove that there exists a unique $L^2$ weak solution in the sense of Definition \ref{def:A.1}. We note that in the proof of \cite[Theorem 3.17]{Brezis} the inequality (26) therein plays a key role in guaranteeing both the existence and uniqueness of solutions, and that this inequality is a consequence of monotonicity of the operator.

%essentially uses the inequality (26) in \cite{Brezis}, which 
%In the proof of Theorem 1, the inequality plays an important role in guaranteeing both the existence and uniqueness of the solution.

\section*{Acknowledgments}

The authors would like to thank Thierry Cazenave for clearly
explaining Theorem 1.2 b) in \cite{CaHa80}, Nobu Kishimoto for
pointing out the reference \cite{BO13}, and Guillaume
  Ferriere for precious comments on this paper.

R.C. is partially supported by Centre Henri Lebesgue, program
ANR-11-LABX-0020-0.  

M.H. is supported by JSPS KAKENHI Grant Number JP22K20337 and by the
Italian MIUR PRIN project 2020XB3EFL.

T.O. is supported by JSPS
KAKENHI Grant Numbers 18KK073 and 19H00644.

A CC-BY public copyright license has been applied by the authors to the present document and will be applied to all subsequent versions up to the Author Accepted Manuscript arising from this submission. 

%%%%%%%%%%%%%%%
%%%%%%%%%%%%%%%

%\bibliographystyle{amsplain_abbrev_nobysame_nonumber}
%\bibliography{biblio}

%\bibliographystyle{abbrv}
%\bibliography{biblio}

\end{document}